\newtheorem{thm}{Theorem}[section]
\newtheorem{lem}[thm]{Lemma}
\newtheorem{prop}[thm]{Proposition}
\theoremstyle{definition}
\newtheorem{defn}[thm]{Definition}
\newtheorem{rem}[thm]{Remark}
\numberwithin{equation}{section}
\newtheorem*{thm*}{Теорема}
\newtheorem*{lem*}{Лемма}
\newcommand{\be}{\begin{equation}}
\newcommand{\ee}{\end{equation}}
\begin{document}

\sloppy

\begin{flushright}
\begin{tabular}{l}
{\sf Uzbek  Mathematical}\\
{\sf Journal, 2021,\ No XX, pp.\pageref{firstpage}-\pageref{lastpage}}\\
\end{tabular}
\end{flushright}

\sloppy

\begin{center}
\textbf{\large A discrete-time epidemic SISI model}\\

\textbf{ S.K.Shoyimardonov}
\end{center}

{\small \textbf{Abstract.} We consider a discrete-time epidemic SISI model in case when the population
size is a constant, so the per capita death rate is equal to per capita birth rate.
The evolution operator of this model is a non-linear operator which depends on seven parameters.
Reducing it to a quadratic stochastic operator, we prove uniqueness of interior fixed point
of the operator and study the limit behavior of the trajectory under some conditions to parameters.
\\

\textbf{Keywords:} epidemic, discrete-time, fixed point, simplex, trajectory, limit.

\textbf{MSC (2010):} 37N25, 92D30
\\

\makeatletter
\renewcommand{\@evenhead}{\vbox{\thepage \hfil {\it S.K.Shoyimardonov}   \hrule }}
\renewcommand{\@oddhead}{\vbox{\hfill
{\it A discrete-time epidemic SISI model}\hfill
\thepage \hrule}} \makeatother

\label{firstpage}

\section{Introduction}

\ \ \ \ Classical epidemic models usually assume that either immunity does not exist (the SIS model)
or that experiencing the infection provides permanent or temporary protection against it (the SIR
and SIRS models). In the SIS model a typical individual starts off susceptible, at some stage catches the infection and after an infectious period becomes completely susceptible again. But, in SISI model a member of a population after recovering can infected second time also.
In \cite{Green} the main example of SISI epidemic model was discussed as bovine respiratory syncytial virus (BRSV) amongst cattle in continuous time.
In this paper it was assumed that the population size under consideration is a constant,
so the per capita death rate is equal to per capita birth rate.
In epidemiology, a \emph{susceptible} individual is a member of a population who is at risk of
becoming infected by a disease. A \emph{susceptibility} only refers to the fact that the
virus is able to get into the cell. \emph{Infectivity} is the ability of a pathogen to
establish an infection. A \emph{Pathogen} is any organism that can produce disease.

\ \ \ \ Continuous time SISI model is as follows \cite{Muller}:
\begin{equation}
\begin{cases}
\frac{dS}{dt} & =b(S+I+S_1+I_1)-\mu S-\beta_1 A(I,I_1)S\\[2mm]
\frac{dI}{dt} & =-\mu I-\alpha I+\beta_1 A(I,I_1)S\\[2mm]
\frac{dS_1}{dt} & =-\mu S_1+\alpha I- \beta_2 A(I,I_1)S_1\\[2mm]
\frac{dI_1}{dt} & =-\mu I_1+\beta_2 A(I,I_1)S_1
\end{cases}\label{eq:Eq1}
\end{equation}
where
$S-$  density of susceptibles who did not have the disease before,
$I-$  density of first time infected persons,
$S_1-$  density of recovereds,
$I_1-$  density of second time infected persons,
$b-$ birth rate,
$\mu-$  death rate,
$\alpha-$ recovery rate,
$\beta_1-$  susceptibility of persons in $S$,
$\beta_2-$  susceptibility of persons in $S_1,$
$k_1-$  infectivity of persons in $I$,
$k_2-$  infectivity of persons in $I_1.$
Moreover, $A(I,I_1)$ denotes the so-called force of infection,
$$A(I,I_1)=\frac{k_1I+k_2I_1}{P}$$
and $P=S+I+S_1+I_1$ denotes the total population size.

In \cite{SS} it was assumed that birth rate is a same with death rate, i.e., $b=\mu$ and using some replacements it was obtained a new system:
$$x=\frac{S}{P}, \ \ u=\frac{I}{P}, \ \ y=\frac{S_1}{P}, \ \ v=\frac{I_1}{P},$$

\begin{equation}
\begin{cases}
\frac{dx}{dt} & =b-bx-\beta_1 A(u,v)x\\[2mm]
\frac{du}{dt} & =-bu-\alpha u+\beta_1 A(u,v)x\\[2mm]
\frac{dy}{dt} & =-by+\alpha u -\beta_2 A(u,v)y\\[2mm]
\frac{dv}{dt} & =-bv+\beta_2 A(u,v)y
\end{cases}\label{cont}
\end{equation}
where all parameters are non-negative and $x+u+y+v=1$.

\emph{The quadratic stochastic operator} (QSO) \cite{GMR}, \cite{L}, \cite{Rpd} is
a mapping of the standard simplex.
\begin{equation}
S^{m-1}=\{x=(x_{1},...,x_{m})\in\mathbb{R}^{m}:x_{i}\geq0,\sum\limits _{i=1}^{m}x_{i}=1\}\label{2}
\end{equation}
into itself, of the form
\begin{equation}
V:x'_{k}=\sum\limits _{i=1}^{m}\sum\limits _{j=1}^{m}P_{ij,k}x_{i}x_{j},\qquad k=1,...,m,\label{3}
\end{equation}
where the coefficients $P_{ij,k}$ satisfy the following conditions
\begin{equation}
P_{ij,k}\geq0,\quad P_{ij,k}=P_{ji,k},\quad\sum\limits _{k=1}^{m}P_{ij,k}=1,\qquad(i,j,k=1,...,m).\label{4}
\end{equation}

For a given $\lambda^{(0)}\in S^{m-1}$ the \emph{trajectory}
(orbit) $\{\lambda^{(n)};n\geq0\}$ of $\lambda^{(0)}$ under
the action of QSO (\ref{3}) is defined by
\[
\lambda^{(n+1)}=V(\lambda^{(n)}),\;n=0,1,2,...
\]

The main problem in mathematical biology consists in the study of the asymptotical behaviour of the trajectories.

\section{Main Part}
\ \ \ \  In \cite{SS} it was studied the following discrete-time version of the system
(\ref{cont}).

\begin{equation}
V:\left\{ \begin{alignedat}{1}x^{(1)} & =x+b-bx-\beta_1 A(u,v)x\\
u^{(1)} & =u-bu-\alpha u+\beta_1 A(u,v)x\\
y^{(1)} & =y-by+\alpha u -\beta_2 A(u,v)y\\
v^{(1)} & =v-bv+\beta_2 A(u,v)y
\end{alignedat}
\right.\label{disc}
\end{equation}
where $A(u,v)=k_1u+k_2v.$

\begin{prop}\cite{SS}
We have $V\left(S^{3}\right)\subset S^{3}$ if and only if the
non-negative parameters $b,\alpha, \beta_1, \beta_2, k_1, k_2$
verify the following conditions
\begin{equation}
\begin{array}{cccc}
\alpha+b\leq1, & \beta_{1}k_{2}\leq2, & \beta_{2}k_{1}\leq2,\medskip\\
b+ \beta_{2}k_{2}\leq1, & \left|b-\beta_{1}k_{1}\right|\leq1, & \left|b-\beta_{2}k_{2}\right|\leq1,\medskip\\
\left|b-\beta_{1}k_{2}\right|\leq1,  & \left|\alpha+b-\beta_{1}k_{1}\right|\leq1, & \left|\alpha-b-\beta_{2}k_{1}\right|\leq1.
\end{array}\label{cond}
\end{equation}

Moreover, under conditions (\ref{cond}) the operator $V$ is
a QSO.
\end{prop}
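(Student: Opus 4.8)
The plan is to turn $V$ into a genuinely quadratic map by homogenizing against the simplex relation $x+u+y+v=1$, and then to reduce the whole statement to sign conditions on the resulting coefficient array.

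First I would record the one structural identity that drives everything. Adding the four components of (\ref{disc}) gives $x^{(1)}+u^{(1)}+y^{(1)}+v^{(1)}=x+u+y+v$ identically in the parameters, because the birth term $+b$ cancels $-b(x+u+y+v)$, the recovery terms $\pm\alpha u$ cancel, and the two infection fluxes $\pm\beta_1 A(u,v)x$ and $\pm\beta_2 A(u,v)y$ cancel. Hence the hyperplane $\{x+u+y+v=1\}$ is preserved for \emph{all} choices of parameters, so the inclusion $V(S^3)\subset S^3$ is equivalent to the single requirement that each of the four components be nonnegative on $S^3$.

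Next I would homogenize: on $S^3$ one replaces every constant $c$ by $c(x+u+y+v)^2$ and every linear monomial $cx_i$ by $cx_i(x+u+y+v)$, turning each component into a symmetric homogeneous quadratic form $x_k^{(1)}=\sum_{i,j}P_{ij,k}x_ix_j$. Collecting the coefficient of each $x_ix_j$ yields explicit expressions such as $P_{11,1}=1$, $P_{22,2}=1-\alpha-b$, $P_{12,1}=\tfrac12(1+b-\beta_1k_1)$, $P_{14,1}=\tfrac12(1+b-\beta_1k_2)$ and $P_{34,3}=\tfrac12(1-b-\beta_2k_2)$; the identity above guarantees $\sum_kP_{ij,k}=1$ automatically and $P_{ij,k}=P_{ji,k}$ holds by construction. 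With this in hand the \emph{moreover} clause and the \emph{if} direction collapse into one statement: conditions (\ref{cond}) are exactly what is needed to make every $P_{ij,k}$ nonnegative, whereupon $V$ satisfies (\ref{4}), is a QSO, and therefore maps $S^3$ into itself. The bulk of this direction is the routine matching of the nine inequalities against the coefficient list, where one checks for example that $P_{12,1}\ge0$ and $P_{12,2}\ge0$ reproduce $|b-\beta_1k_1|\le1$ and $|\alpha+b-\beta_1k_1|\le1$ once the opposite halves are discarded as automatic consequences of $\alpha+b\le1$ and nonnegativity of the parameters, that $P_{34,3}\ge0$ gives $b+\beta_2k_2\le1$, and that $P_{14,1}\ge0$ forces $\beta_1k_2\le1+b\le2$.

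The step I expect to be the main obstacle is the \emph{only if} direction. Evaluating the components at the vertices $e_i$ immediately yields the diagonal constraints $P_{ii,k}\ge0$, which is how $\alpha+b\le1$ and $b\le1$ appear; but the off-diagonal conditions are more delicate, since on a generic edge or face the sign of an off-diagonal coefficient is screened by the nonnegative diagonal ones and cannot be read off from a single value. The careful part is therefore to probe $V$ along precisely those one- and two-dimensional faces of $S^3$ on which the relevant diagonal coefficients vanish, so that each remaining inequality of (\ref{cond}) is isolated as the exact threshold at which the corresponding component first touches zero; this is where the equivalence between mapping $S^3$ into itself and the nonnegativity of the canonical quadratic coefficients must be argued with some attention.
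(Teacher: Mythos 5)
The paper contains no proof of this proposition: it is quoted from \cite{SS}, so there is no in-text argument to compare yours against and I can only judge the proposal on its own terms. Your treatment of the \emph{if} direction and of the \emph{moreover} clause is correct and is the natural route: the sum of the four components equals $b+(1-b)(x+u+y+v)$, hence equals $1$ on the simplex for every choice of parameters; the homogenized coefficients you display, e.g.\ $P_{11,1}=1$, $P_{22,2}=1-\alpha-b$, $P_{12,1}=\tfrac12(1+b-\beta_1k_1)$, $P_{14,1}=\tfrac12(1+b-\beta_1k_2)$, $P_{34,3}=\tfrac12(1-b-\beta_2k_2)$, are all correct, they satisfy $\sum_kP_{ij,k}=1$ automatically, and matching their nonnegativity against (\ref{cond}) does establish both that $V$ is a QSO and that $V(S^3)\subset S^3$ under those conditions.

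The \emph{only if} direction, however, is not merely ``delicate'': the plan you sketch for it cannot be executed. You propose to isolate each off-diagonal inequality by restricting to faces on which the screening diagonal coefficients vanish, but for the inequality $|b-\beta_1k_1|\le1$ the screening coefficients of $x^{(1)}$ are $P_{11,1}=1$ and $P_{22,1}=b$, and $P_{11,1}$ vanishes on no face. On the worst-case edge $y=v=0$ one finds
\[
x^{(1)}=\beta_1k_1x^2+(1-b-\beta_1k_1)x+b,\qquad x\in[0,1],
\]
whose discriminant is $(1-b-\beta_1k_1)^2-4b\beta_1k_1$; this form is positive on the whole edge (indeed on all of $S^3$) whenever $\beta_1k_1<\bigl(1+\sqrt{b}\bigr)^2$, which for $b>0$ is strictly weaker than the listed bound $\beta_1k_1\le 1+b$. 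Concretely, $b=1/4$, $\beta_1k_1=1.3$, $\alpha=\beta_2=k_2=0$ gives $V(S^3)\subset S^3$ (the other three components are then trivially nonnegative) while $|b-\beta_1k_1|=1.05>1$. Nonnegativity of a quadratic form on the simplex is genuinely weaker than nonnegativity of its canonical coefficients, so the coefficientwise equivalence your outline relies on fails for $b>0$: the only-if half cannot be recovered by probing faces, and an honest argument must either take a different route or restate the proposition as an equivalence with the QSO conditions (\ref{4}) rather than with the bare inclusion $V(S^3)\subset S^3$.
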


Recall that fixed point of the operator $F$ is a solution of equation $F(x)=x.$

Let us denote
\begin{itemize}
\item[] $\lambda_{1}=\left(1,0,0,0\right), \ \  \lambda_{2}=\left(0,1,0,0\right), \ \ \lambda_{3}=\left(0,0,1,0\right),\ \ \lambda_{4}=\left(0,0,0,1\right),$
\item[] $\Lambda_{5}=\{\lambda=(x,u,y,v)\in S^3: x=u=0\},$
\item[] $\Lambda_{6}=\{\lambda=(x,u,y,v)\in S^3: x=y=0\},$
\item[] $\Lambda_{7}=\{\lambda=(x,u,y,v)\in S^3: x=v=0\},$
\item[] $\Lambda_{8}=\{\lambda=(x,u,y,v)\in S^3: u=y=0\},$
\item[] $\Lambda_{9}=\{\lambda=(x,u,y,v)\in S^3: u=v=0\},$
\item[] $\Lambda_{10}=\{\lambda=(x,u,y,v)\in S^3: y=v=0\},$
\item[] $\Lambda_{11}=\{\lambda=(x,u,y,v)\in S^3: x=0\},$
\item[] $\Lambda_{12}=\{\lambda=(x,u,y,v)\in S^3: u=0\},$
\item[] $\Lambda_{13}=\{\lambda=(x,u,y,v)\in S^3: y=0\},$
\item[] $\Lambda_{14}=\{\lambda=(x,u,y,v)\in S^3: v=0\},$
\item[]  $\lambda_{15}=\left(\frac{b}{\beta_1 k_1},\frac{\beta_1 k_1-b}{\beta_1 k_1},0,0\right), \ \ \lambda_{16}=\left(\frac{b+\alpha}{\beta_1 k_1},\frac{b(\beta_1k_1-b-\alpha)}{\beta_1k_1(b+\alpha)},\frac{\alpha(\beta_1k_1-b-\alpha)}{\beta_1k_1(b+\alpha)},0 \right),$

\item[] $\lambda_{17}=\left(\frac{b}{b+\beta_1 A},\frac{b\beta_1A}{(b+\beta_1 A)(b+\alpha)},\frac{\alpha b\beta_1A}{(b+\beta_1 A)(b+\beta_2A)(b+\alpha)},\frac{\alpha \beta_1\beta_2 A^2}{(b+\beta_1 A)(b+\beta_2A)(b+\alpha)} \right),$

where  $A$ is a positive solution of the equation
\begin{equation}\label{fpc}
1=\frac{b\beta_1k_1}{(b+\beta_1 A)(b+\alpha)}+\frac{\alpha \beta_1\beta_2k_2A}{(b+\beta_1 A)(b+\beta_2A)(b+\alpha)}
\end{equation}
\end{itemize}

\begin{rem} If $b=\alpha=k_1=k_2=0$ or $b=\alpha=\beta_1=\beta_2=0$ then the operator (\ref{disc}) is an identity operator.
\end{rem}
By the following proposition we give all possible fixed points
of the operator $V.$
\begin{prop}\cite{SS}
\label{fp} Let $Fix(V)$ be set of fixed points of the operator (\ref{disc}). Then

$$Fix(V)=\left\{\begin{array}{lll}
\{\lambda_1\} \ \ \\[2mm]
\{\lambda_4\}\bigcup\Lambda_{9}, \ \ {\rm if} \ \  b=0 \\[2mm]
\Lambda_{6}\bigcup\Lambda_{9}, \ \ \ \ {\rm if} \ \ b=\alpha=0 \\[2mm]
\Lambda_{8}\bigcup\Lambda_{9}, \ \ \ \ {\rm if} \ \ b=\beta_1=0 \\[2mm]
\Lambda_{5}\bigcup\Lambda_{9}, \ \ \ \ {\rm if} \ \ b=\beta_2=0 \\[2mm]
\{\lambda_{4}\}\bigcup\Lambda_{9}, \ \ \ \ {\rm if} \ \ b=k_1=0 \\[2mm]
\Lambda_{12}, \ \ \ \ {\rm if} \ \ b=k_2=0 \\[2mm]
\Lambda_{9}\bigcup\Lambda_{13}, \ \ \ \ {\rm if} \ \ b=\alpha=\beta_1=0 \\[2mm]
\Lambda_{9}\bigcup\Lambda_{11}, \ \ \ \ {\rm if} \ \ b=\alpha=\beta_2=0 \\[2mm]
\Lambda_{6}\bigcup\Lambda_{14}, \ \ \ \ {\rm if} \ \ b=\alpha=k_1=0 \\[2mm]
\Lambda_{6}\bigcup\Lambda_{12}, \ \ \ \ {\rm if} \ \ b=\alpha=k_2=0 \\[2mm]
\Lambda_{12}, \ \ \ \ \ \ \ \ \ \ \  {\rm if} \ \ b=\beta_1=\beta_2=0 \\[2mm]
\Lambda_{8}\bigcup\Lambda_{9}, \ \ \ \ {\rm if} \ \ b=\beta_1=k_1=0 \\[2mm]
\Lambda_{12}, \ \ \ \ {\rm if} \ \ b=\beta_1=k_2=0 \\[2mm]
\Lambda_{5}\bigcup\Lambda_{9}, \ \ \ \ {\rm if} \ \ b=\beta_2=k_1=0 \\[2mm]
\Lambda_{12},  \ \ \ \ {\rm if} \ \ b=\beta_2=k_2=0 \\[2mm]
\{\lambda_1, \lambda_{15} \}, \ \ \ \ {\rm if} \ \ b>0, \alpha=0 \ \ {\rm and} \ \ \beta_1k_1>b \\[2mm]
\{\lambda_1, \lambda_{16} \}, \ \ \ \ {\rm if} \ \ b>0,\, \alpha>0,\, \beta_2=0, \, \beta_1 k_1>b+\alpha\\[2mm]
\{\lambda_1, \lambda_{17} \}, \ \ \ \ {\rm if} \ \ \alpha b \beta_1\beta_2 k_1>0
\end{array}\right.$$
\end{prop}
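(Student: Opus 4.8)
The plan is to turn $V(\lambda)=\lambda$ into an explicit algebraic system and then carry out an exhaustive case analysis on the seven parameters. Writing (\ref{disc}) coordinatewise and cancelling the identity terms, the fixed-point condition becomes
\[
\begin{cases}
b(1-x)=\beta_1 A x,\\
(b+\alpha)u=\beta_1 A x,\\
(b+\beta_2 A)y=\alpha u,\\
bv=\beta_2 A y,
\end{cases}
\]
where $A=k_1u+k_2v$. Adding the four equations gives $b\,(1-(x+u+y+v))=0$, which holds automatically on $S^3$; hence only three of the four relations are independent on the simplex, and the normalization $x+u+y+v=1$ is available as a fourth equation. The natural dichotomy is whether $b=0$ or $b>0$.

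For $b>0$, the first relation forces $x=\frac{b}{b+\beta_1 A}>0$, and then the second, third and fourth relations successively express $u$, $y$, $v$ as explicit rational functions of $A$; these are precisely the coordinates of $\lambda_{17}$. Substituting back into $A=k_1u+k_2v$ yields either $A=0$, which returns $x=1$, $u=y=v=0$, i.e. the always-present fixed point $\lambda_1$, or, after dividing by $A>0$, the scalar equation (\ref{fpc}). I would then peel off the degenerate subcases: $\alpha=0$ collapses $y=v=0$ and the remaining relation gives $\lambda_{15}$, which lies in $S^3$ exactly when $\beta_1k_1>b$; the regime $\alpha>0,\ \beta_2=0$ collapses $v=0$ and gives $\lambda_{16}$, admissible when $\beta_1k_1>b+\alpha$; and the fully nondegenerate regime $\alpha b\beta_1\beta_2k_1>0$ produces $\lambda_{17}$ alongside $\lambda_1$. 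In each subcase the stated inequality is exactly the condition that all coordinates be nonnegative and the relevant denominators be positive.

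When $b=0$ the first and last relations degenerate to $\beta_1 A x=0$ and $\beta_2 A y=0$, while the second forces $\alpha u=\beta_1 A x$; the system now admits entire faces of fixed points rather than isolated ones. I would organize this regime by the vanishing pattern of $(\alpha,\beta_1,\beta_2,k_1,k_2)$, discarding the two all-identity patterns ruled out by the Remark. For each admissible pattern I substitute the zero parameters and split on whether $A=0$ (which forces $k_1u+k_2v=0$, pinning the surviving coordinates) or $A>0$ (which forces the appropriate coordinates to vanish), then read off the resulting union of the faces $\Lambda_5,\dots,\Lambda_{14}$. For instance, generic $b=0$ yields $\{\lambda_4\}\cup\Lambda_9$, whereas $b=\alpha=0$ yields $\Lambda_6\cup\Lambda_9$, and likewise down the list.

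The main obstacle is the sheer bookkeeping of the $b=0$ regime: for every admissible vanishing pattern I must determine exactly which coordinate subspaces of $S^3$ are pointwise fixed and verify that no solutions are double-counted or overlooked. The only genuinely analytic point is the $b>0$ nondegenerate case, where I must confirm that (\ref{fpc}) has the claimed positive root $A$. Writing its right-hand side as $f(A)$ on $(0,\infty)$—a sum of a strictly decreasing term and a term vanishing at both ends—gives existence, and, under the paper's standing conditions, a single crossing of the level $1$; this uniqueness is what makes $Fix(V)=\{\lambda_1,\lambda_{17}\}$ rather than a larger set, and it is the delicate step that the later theorem on the interior fixed point will rely upon.
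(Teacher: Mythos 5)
The paper states this proposition without proof (it is imported verbatim from \cite{SS}), so there is no in-paper argument to compare against; your reduction of $V(\lambda)=\lambda$ to the four relations $b(1-x)=\beta_1Ax$, $(b+\alpha)u=\beta_1Ax$, $(b+\beta_2A)y=\alpha u$, $bv=\beta_2Ay$, the observation that their sum is automatic on $S^3$, and the split on $b=0$ versus $b>0$ is exactly the natural route and matches how the surrounding text (the definitions of $\lambda_{15},\lambda_{16},\lambda_{17}$ and equation (\ref{fpc})) is organized. The only soft spot is your claim of ``a single crossing of the level $1$'' for (\ref{fpc}): the right-hand side is not monotone in $A$, so uniqueness is not immediate from your description; but the paper proves precisely this in the theorem that follows (by comparing the line $f(x)=b+\beta_1x$ with the bounded increasing function $g$), so deferring that step is legitimate, and the rest of your case bookkeeping, spot-checked on several of the $b=0$ patterns, produces the listed faces correctly.
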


We interested in the positive solutions of the equation (\ref{fpc}).

\begin{thm} For the equation (\ref{fpc}) the following cases hold:\\
(i) If $\beta_1k_1= b+\alpha$ and $\alpha\beta_2k_2>b\beta_1k_1$ then the equation (\ref{fpc}) has unique positive solution;\\
(ii) If $\beta_1k_1> b+\alpha$ the the equation (\ref{fpc}) has unique positive solution;\\
(iii) If $\beta_1k_1<b+\alpha$ then the equation (\ref{fpc}) does not have positive solution.
\end{thm}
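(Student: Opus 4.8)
The plan is to clear denominators in (\ref{fpc}) and reduce it to a quadratic in $A$, after which the three cases are distinguished purely by the signs of the coefficients. Multiplying (\ref{fpc}) by $(b+\alpha)(b+\beta_1A)(b+\beta_2A)$, which is strictly positive for all $A\ge0$ when the parameters are positive, shows that on $(0,\infty)$ the equation is equivalent to $P(A)=0$ with $P(A)=c_2A^2+c_1A+c_0$, where
\[
c_2=(b+\alpha)\beta_1\beta_2>0,\qquad c_0=b^2\big[(b+\alpha)-\beta_1k_1\big],
\]
and $c_1=(b+\alpha)b(\beta_1+\beta_2)-\beta_1\beta_2(bk_1+\alpha k_2)$. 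Since $c_2>0$, Vieta's formula gives the product of the two roots as $c_0/c_2$, and the sign of $c_0$ is exactly what separates (i)--(iii).

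For (ii), $\beta_1k_1>b+\alpha$ makes $c_0<0$, so the product of the roots is negative; the roots are then real and of opposite sign, giving exactly one positive solution. For (i), $\beta_1k_1=b+\alpha$ makes $c_0=0$, so $P(A)=A(c_2A+c_1)$ and the only candidate positive root is $-c_1/c_2$. Substituting $\beta_1k_1=b+\alpha$ simplifies the linear coefficient to $c_1=\beta_1\big[b(b+\alpha)-\alpha\beta_2k_2\big]$, and the hypothesis $\alpha\beta_2k_2>b\beta_1k_1=b(b+\alpha)$ forces $c_1<0$, so $-c_1/c_2$ is the unique positive solution.

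Case (iii) is where I expect the main obstacle. Here $\beta_1k_1<b+\alpha$ gives $c_0>0$, so the product of the roots is positive and the two roots share a sign; one must rule out that both are positive. If $c_1\ge0$ this is immediate, since then $P(A)>0$ for every $A>0$. The hard regime is $c_1<0$, where $P$ could a priori have two positive roots, and to exclude them one must show the discriminant $c_1^2-4c_2c_0$ is negative. This does not seem to follow from $\beta_1k_1<b+\alpha$ by itself; the admissibility conditions (\ref{cond}) that make $V$ a QSO --- especially $b+\beta_2k_2\le1$ and $\alpha+b\le1$, which bound the product $\alpha\beta_2k_2$ that drives $c_1$ negative --- appear indispensable here. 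Determining the precise combination of the inequalities in (\ref{cond}) that forces $c_1^2<4c_2c_0$ (equivalently $P>0$ on $(0,\infty)$) is the crux of the argument and the step I would concentrate on.
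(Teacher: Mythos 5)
Your reduction of (\ref{fpc}) to the quadratic $P(A)=c_2A^2+c_1A+c_0$ is correct (the coefficients check out), and your treatment of cases (i) and (ii) via the sign of $c_0$ and Vieta's formulas is complete and arguably cleaner than the paper's, which instead multiplies only by $(b+\beta_1A)$, sets $f(x)=b+\beta_1x$ and $g(x)=\frac{b\beta_1k_1}{b+\alpha}+\frac{\alpha\beta_1\beta_2k_2x}{(b+\beta_2x)(b+\alpha)}$, and argues from $f(0)<g(0)$ together with the monotonicity, curvature and horizontal asymptote of $g$.

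Case (iii), however, is a genuine gap, and you have correctly located it: when $c_0>0$ and $c_1<0$ you must show $c_1^2<4c_2c_0$, and you stop at announcing that this is the crux rather than proving it. This is exactly where the paper does its real work, and, as you suspected, it cannot be done from $\beta_1k_1<b+\alpha$ alone --- the constraints (\ref{cond}) are indispensable. The paper's route: if $f=g$ had two positive roots $x_1<x_2$, there would be $\bar x\in(x_1,x_2)$ with $f'(\bar x)=g'(\bar x)$ and $f(\bar x)<g(\bar x)$; equating slopes gives $\beta_2\bar x=-b+\sqrt{\alpha b\beta_2k_2/(b+\alpha)}$ explicitly, and substituting this into $f(\bar x)<g(\bar x)$ yields $k_2>\frac{b(b+\alpha)}{\alpha\beta_2}\left(\frac{\beta_2}{\beta_1}\left(1-\frac{\beta_1k_1}{b+\alpha}\right)+\frac{1}{b}\right)^2>\frac{b+\alpha}{b\alpha\beta_2}$, hence $\beta_2k_2>\frac{1}{\alpha}\ge 1$ (using $\alpha+b\le1$), contradicting $b+\beta_2k_2\le1$ from (\ref{cond}). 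Your discriminant condition $c_1^2\ge 4c_2c_0$ is algebraically equivalent to $f(\bar x)\le g(\bar x)$ at that equal-slope point, so finishing in your coordinates requires essentially the same computation; the two inequalities from (\ref{cond}) that close the argument are $\alpha+b\le1$ and $b+\beta_2k_2\le1$, the latter being the one you singled out. As written, the proposal establishes (i) and (ii) but not (iii).
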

\begin{proof} First of all, we denote by $f(x), g(x):$
\begin{equation}\label{eqq}
f(x)=b+\beta_1x, \ \ \ \ \ \  g(x)=\frac{b\beta_1k_1}{b+\alpha}+\frac{\alpha\beta_1\beta_2k_2x}{(b+\beta_2x)(b+\alpha)}.
\end{equation}
Then the roots of the equation (\ref{fpc})  are roots of the equation $f(x)=g(x).$\\

\emph{(i)} If  $\beta_1k_1= b+\alpha$ then by (\ref{eqq}) it follows that
$$\beta_1x=\frac{\alpha\beta_2k_2x}{k_1(b+\beta_2x)} \ \ \Rightarrow x_1=0, \ \ x_2=\frac{\alpha\beta_2k_2-b\beta_1k_1}{\beta_1\beta_2k_1}.$$
By condition of the theorem  $x_2>0.$\\

\emph{(ii)}  First, we find the horizontal asymptote of the graph of the function $g(x):$
 $$y=\frac{\beta_1(bk_1+\alpha k_2)}{b+\alpha}=const.$$
 Moreover, right part (with respect to vertical asymptote) of the graphic of $g(x)$ is increasing and convex.   By the condition of theorem $\beta_1k_1> b+\alpha,$ so $\frac{b\beta_1k_1}{b+\alpha}>b,$ i.e., $f(0)<g(0).$ Therefore, the equation (\ref{eqq}) has one positive solution (Fig. \ref{fig1}).\\

 \emph{(iii)}  Let $\beta_1k_1<b+\alpha$. Then $f(0)>g(0).$ Assume that $f(x)=g(x)$ has positive solution. Then there are two solutions $x_1, x_2$ (suppose that $x_1<x_2$) and for any $x\in(x_1;x_2),$ $f(x)<g(x).$ Note that $g(x)$ is convex and increasing function, so there exists $\bar{x}\in(x_1;x_2)$ such that at this point $f(x)$ and $g(x)$ have same slope. Let's equate these slopes at the point $\bar{x}$:
 $$\beta_1=\frac{\alpha b\beta_1\beta_2k_2}{(b+\alpha)(b+\beta_2\bar{x})^2} \Rightarrow \beta_2\bar{x}=-b+\sqrt{\frac{\alpha b\beta_2k_2}{b+\alpha}}.$$
 Since $\bar{x}>0$ one obtains that $k_2>\frac{b(b+\alpha)}{\alpha \beta_2}.$ In addition, $f(\bar{x})<g(\bar{x}),$ from this after some calculations we get the condition for parameters:
 $$k_2>\frac{b(b+\alpha)}{\alpha \beta_2}\left(\frac{\beta_2}{\beta_1}\left(1-\frac{\beta_1k_1}{b+\alpha}\right)+\frac{1}{b}\right)^2$$
But, from $\beta_1k_1<b+\alpha$ and using positiveness all parameters we have
$$k_2>\frac{b(b+\alpha)}{\alpha \beta_2}\left(\frac{\beta_2}{\beta_1}\left(1-\frac{\beta_1k_1}{b+\alpha}\right)+\frac{1}{b}\right)^2>\frac{b(b+\alpha)}{\alpha \beta_2}\left(\frac{1}{b}\right)^2=\frac{b+\alpha}{b\alpha\beta_2}>\frac{1}{\alpha\beta_2}.$$
From this we get $\beta_2k_2>\frac{1}{\alpha}>1$ which is contradiction to the conditions (\ref{cond}). Hence, there is no any positive solution in this case (Fig. \ref{fig2}). Theorem is proved.
\end{proof}

\begin{figure}[h!]
\begin{multicols}{2}
\hfill
\includegraphics[width=5.8cm]{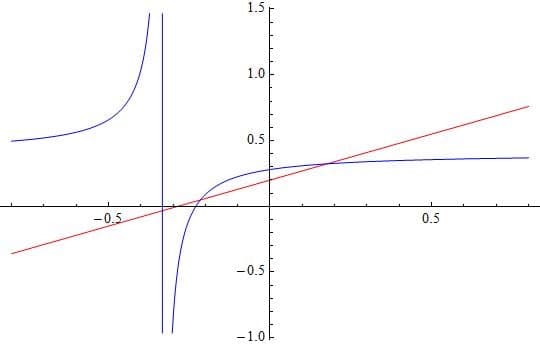}
\hfill
\caption{ $\beta_1k_1>b+\alpha: \alpha=0.3, b=0.2, \beta_1=0.7, \beta_2=0.6, k_1=1, k_2=0.3$}\label{fig1}
\label{figLeft}
\hfill
\includegraphics[width=5.9cm]{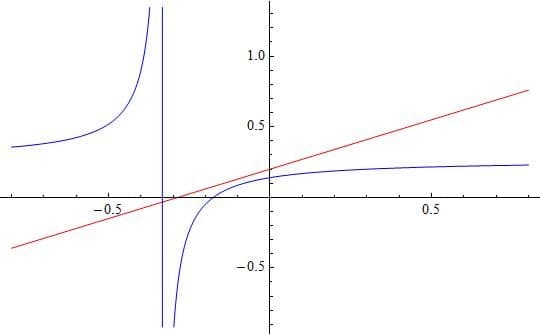}
\hfill
\caption{ $\beta_1k_1<b+\alpha: \alpha=0.3, b=0.2, \beta_1=0.7, \beta_2=0.6, k_1=0.5, k_2=0.3$}\label{fig2}
\label{figRight}
\end{multicols}
\end{figure}

\begin{defn}
\cite{De}. A fixed point $p$ for $F:\mathbb{R}^{m}\rightarrow\mathbb{R}^{m}$
is called \emph{hyperbolic} if the Jacobian matrix $\textbf{J}=\textbf{J}_{F}$
of the map $F$ at the point $p$ has no eigenvalues on the unit
circle.There are three types of hyperbolic fixed points:

(1) $p$ is an attracting fixed point if all of the eigenvalues
of $\textbf{J}(p)$ are less than one in absolute value.

(2) $p$ is an repelling fixed point if all of the eigenvalues
of $\textbf{J}(p)$ are greater than one in absolute value.

(3) $p$ is a saddle point otherwise.
\end{defn}

\textbf{Conjecture 1.} \cite{SS} If $\beta_2=0$ then for an initial point $\lambda^{0}=\left(x^{0},u^{0},y^{0},v^{0}\right)\in S^{3}$
(except fixed points) the trajectory has the following limit
\[
\lim_{n\to\infty}V^{(n)}(\lambda^{0})=\begin{cases}
\lambda_1  & \text{if } \ \  \ \ \beta_1k_1\leq b+\alpha, b\alpha>0 \\
\lambda_{16} & \text{if } \ \ u^0+v^0>0 \ \ \text{and} \ \ \beta_1k_1>b+\alpha, b\alpha>0\\
\end{cases}
\]

\begin{thm} If $\beta_2=0$ and $\beta_1k_1>b+\alpha, b\alpha>0$ then there exists a neighborhood $U(\lambda_{16})$
such that
$$\lim_{n\to \infty}V^{(n)}=\lambda_{16}.$$
\end{thm}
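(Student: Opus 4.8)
The plan is to prove that $\lambda_{16}$ is an \emph{attracting hyperbolic} fixed point of $V$ in the sense recalled in the definition above, and then to invoke the standard fact (as in the reference used for that definition) that a hyperbolic attracting fixed point of a smooth map admits a neighborhood on which every orbit converges to it. This neighborhood is exactly the desired $U(\lambda_{16})$. Since the dynamics take place on the three-dimensional simplex $S^3$, I first reduce to three essential coordinates: with $\beta_2=0$ the last equation of $V$ becomes $v^{(1)}=(1-b)v$, and the components $x^{(1)},u^{(1)},v^{(1)}$ do not involve $y$ at all, so the $(x,u,v)$-subsystem is closed and the fourth coordinate is recovered from $y=1-x-u-v$. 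Hence it suffices to study the $3\times3$ Jacobian of the reduced map $(x,u,v)\mapsto(x^{(1)},u^{(1)},v^{(1)})$ at $(x^*,u^*,0)$, where $x^*=\tfrac{b+\alpha}{\beta_1k_1}$ and $u^*=\tfrac{b(\beta_1k_1-b-\alpha)}{\beta_1k_1(b+\alpha)}$.

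Next I would compute this Jacobian explicitly. Writing $s=\beta_1 A(u^*,0)=\beta_1k_1u^*=\tfrac{b(\beta_1k_1-b-\alpha)}{b+\alpha}$ and using the fixed-point identity $\beta_1k_1x^*=b+\alpha$, the matrix takes the block upper-triangular form
\[
\begin{pmatrix} 1-b-s & -(b+\alpha) & -\tfrac{k_2(b+\alpha)}{k_1} \\ s & 1 & \tfrac{k_2(b+\alpha)}{k_1} \\ 0 & 0 & 1-b \end{pmatrix},
\]
whose spectrum consists of the eigenvalue $1-b$ coming from the decoupled $v$-direction, together with the two eigenvalues of the $2\times2$ block in the $(x,u)$ variables. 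Because $b\alpha>0$ and $\alpha+b\le1$ force $0<1-b<1$, the first eigenvalue already lies strictly inside the unit disk; note also that $\beta_1k_1>b+\alpha$ gives $s>0$, so $\lambda_{16}$ is a genuinely interior point.

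It then remains to show that both roots of the characteristic polynomial $\mu^2-T\mu+D$ of the $2\times2$ block, with trace $T=2-b-s$ and determinant $D=1-b-s(1-b-\alpha)$, lie in the open unit disk. I would apply the Schur–Cohn (Jury) criterion for a monic quadratic, namely $|D|<1$ and $|T|<1+D$. Three of the four resulting inequalities are immediate: $D<1$ reduces to $b+s(1-b-\alpha)>0$, which holds by positivity since $\alpha+b\le1$; the inequality $T<1+D$ reduces to $s(b+\alpha)>0$; and $D>-1$ reduces to $2-b>s(1-b-\alpha)$, which follows from $s<1$ and $b<1$. The only nontrivial inequality is $T>-(1+D)$, i.e. $4-2b>s(2-b-\alpha)$.

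The main obstacle, and the single place where the admissibility conditions (\ref{cond}) are genuinely needed, is a uniform upper bound on $s$. The constraint $|\alpha+b-\beta_1k_1|\le1$ together with $\beta_1k_1>b+\alpha$ yields $\beta_1k_1-b-\alpha\le1$, hence $s\le\tfrac{b}{b+\alpha}<1$. Substituting this bound into the remaining inequality and clearing the positive denominator reduces it to $b(2-b)+\alpha(4-b)>0$, which holds since $0<b<1$ and $\alpha>0$. With all three eigenvalues of the reduced Jacobian strictly inside the unit disk, $\lambda_{16}$ is an attracting hyperbolic fixed point, and the existence of the convergent neighborhood $U(\lambda_{16})$ follows from the standard local stability theorem.
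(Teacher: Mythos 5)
Your proposal is correct and follows essentially the same route as the paper: both linearize $V$ at $\lambda_{16}$, observe that the spectrum consists of the eigenvalue $1-b$ together with the two roots of the quadratic $\mu^2-(2-b-\beta_1A)\mu+1-b-\beta_1A(1-b-\alpha)$ (your $s$ is exactly the paper's $\beta_1A$), and use the bound $\beta_1k_1-b-\alpha\le 1$ coming from (\ref{cond}) to place all eigenvalues strictly inside the unit disk before invoking the standard local stability theorem. The only differences are cosmetic: you pass to the closed $(x,u,v)$-subsystem and so drop one copy of the eigenvalue $1-b$ that the paper's $4\times4$ Jacobian retains, and you certify the root location with the Schur--Cohn/Jury inequalities $|D|<1$, $|T|<1+D$ in one stroke, where the paper instead splits into real and complex cases according to the sign of the discriminant $(b-\beta_1A)^2-4\beta_1A\alpha$.
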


\begin{proof} If we prove the attractiveness of the fixed point $\lambda_{16}$ then based on general theory (see \cite{De}, \cite{Rmb}) we can say that there exists required neighbourhood. Assume that $\beta_2=0$ then the Jacobian matrix of the operator $V$ is
\[ J=
\left[\begin{array}{cccccc}
1-b-\beta_1A & -\beta_1k_1x &0 & -\beta_1k_2x\\
\beta_1A & 1-b-\alpha+\beta_1k_1x & 0 & \beta_1k_2x \\
0 & \alpha & 1-b& 0 \\
0 & 0 & 0& 1-b
\end{array}\right]
\]

Recall that $\lambda_{16}=\left(\frac{b+\alpha}{\beta_1 k_1},\frac{b(\beta_1k_1-b-\alpha)}{\beta_1k_1(b+\alpha)},\frac{\alpha(\beta_1k_1-b-\alpha)}{\beta_1k_1(b+\alpha)},0 \right),$ then

\[ J(\lambda_{16})=
\left[\begin{array}{cccccc}
1-b-\beta_1A & -(b+\alpha) &0 & -\frac{k_2}{k_1}(b+\alpha)\\
\beta_1A & 1 & 0 & \frac{k_2}{k_1}(b+\alpha) \\
0 & \alpha & 1-b& 0 \\
0 & 0 & 0& 1-b
\end{array}\right]
\]
From this to find eigenvalues we get

\[ |J(\lambda_{16})-\mu E|=
\left|\begin{array}{cccccc}
1-b-\beta_1A-\mu & -(b+\alpha) &0 & -\frac{k_2}{k_1}(b+\alpha)\\
\beta_1A & 1-\mu & 0 & \frac{k_2}{k_1}(b+\alpha) \\
0 & \alpha & 1-b-\mu& 0 \\
0 & 0 & 0& 1-b-\mu
\end{array}\right|=0
\]
where $E$ is an identity matrix, solving this determinant we obtain the following equation:
\begin{equation}\label{char}
(1-b-\mu)^2((1-b-\beta_1A-\mu)(1-\mu)+\beta_1A(b+\alpha))=0
\end{equation}
Thus, two coinciding solutions of the (\ref{char}) are $\mu_1=\mu_2=1-b$ which are always between 0 and 1.  Next, we find other solutions, from (\ref{char}) we have

$$\mu^2-(2-b-\beta_1A)\mu+\beta_1A(b+\alpha)+1-b-\beta_1A=0 \ \ \ \ \Rightarrow $$

$$\mu^2-(1-b+1-\beta_1A)\mu+\beta_1A\alpha+(1-b)(1-\beta_1A)=0$$
then the discriminant is
$$D=(1-b+1-\beta_1A)^2-4(1-b)(1-\beta_1A)-4\beta_1A\alpha=(b-\beta_1A)^2-4\beta_1A\alpha$$
thus,
$$\mu_3=\frac{2-b-\beta_1A-\sqrt{(b-\beta_1A)^2-4\beta_1A\alpha}}{2}, $$ $$\mu_4=\frac{2-b-\beta_1A+\sqrt{(b-\beta_1A)^2-4\beta_1A\alpha}}{2}. $$
where $A=k_1u=\frac{b(\beta_1k_1-b-\alpha)}{\beta_1(b+\alpha)}.$

\textbf{Case:} $(b-\beta_1A)^2-4\beta_1A\alpha\geq0.$ Let us show that $|\mu_3|<1, |\mu_4|<1.$ 
Since $\beta_1k_1>b+\alpha, b\alpha>0$ we have
$$b+\beta_1A+\sqrt{(b-\beta_1A)^2-4\beta_1A\alpha}>0$$ and $$b+\beta_1A-\sqrt{(b-\beta_1A)^2-4\beta_1A\alpha}>b+\beta_1A-\sqrt{(b-\beta_1A)^2}>b+\beta_1A-|b-\beta_1A|>0.$$
From this we get $\mu_3<1, \mu_4<1.$
We consider the conditions $\mu_3>-1, \mu_4>-1.$ It is enough to show that
$$\frac{2-b-\beta_1A-\sqrt{(b-\beta_1A)^2-4\beta_1A\alpha}}{2}>-1,$$
 i.e.,  $b+\beta_1A+\sqrt{(b-\beta_1A)^2-4\beta_1A\alpha}<4.$
Since
$$b+\beta_1A+\sqrt{(b-\beta_1A)^2-4\beta_1A\alpha}<b+\beta_1A+|b-\beta_1A|,$$
it follows that if $b>\beta_1A$ then $2b<2<4,$ if $b<\beta_1A$ then $2\beta_1A=2\beta_1k_1u<2\beta_1k_1\leq2(1+b)<4,$ because, by the conditions (\ref{cond}) it obtains $|b-\beta_{1}k_{1}|\leq1 \Rightarrow \beta_{1}k_{1}\leq1+b.$

\textbf{Case:} $(b-\beta_1A)^2-4\beta_1A\alpha<0.$ First, we show that $\beta_1A\leq1.$ By the conditions (\ref{cond}) we have $|\alpha+b-\beta_{1}k_{1}|\leq1$ and from this $\beta_{1}k_{1}-b-\alpha\leq1,$ so $\beta_1A=\frac{b(\beta_1k_1-b-\alpha)}{b+\alpha}<1.$ In addition, $\mu_{3,4}=1-\frac{b+\beta_1A}{2}\mp\frac{\sqrt{4\beta_1A\alpha-(b-\beta_1A)^2}}{2}i,$ and from this

$$|\mu_{3,4}|=\sqrt{\left(1-\frac{b+\beta_1A}{2}\right)^2+\frac{4\beta_1A\alpha-(b-\beta_1A)^2}{4}}=$$\\
$=\sqrt{1-\beta_1A(1-\alpha)-b(1-\beta_1A)}<1.$

The proof of the Theorem is completed.
\end{proof}

 \textbf{Conjecture 2.}\cite{SS} If $\alpha b \beta_1\beta_2 k_1>0$ then for an initial point $\lambda^{0}=\left(x^{0},u^{0},y^{0},v^{0}\right)\in S^{3}$
(except fixed points) the trajectory  has the following limit
\[
\lim_{n\to\infty}V^{(n)}(\lambda^{0})=\begin{cases}
\lambda_1  & \text{if } \ \ u^0=v^0=0 \ \ \text{or} \ \ \beta_1k_1\leq b+\alpha \\
\lambda_{17} & \text{if } \ \  u^0+v^0>0 \ \ \text{and} \ \ \beta_1k_1>b+\alpha\\
\end{cases}
\]

\begin{thm} If $k_2=0$ then for an initial point $\lambda^{0}=\left(x^{0},u^{0},y^{0},v^{0}\right)\in S^{3}$
(except fixed points) the trajectory  has the following limit
\[
\lim_{n\to\infty}V^{(n)}(\lambda^{0})=\lambda_1=(1;0;0;0)  \ \ if \ \ u^0=0 \ \ or \ \ \beta_1k_1\leq b+\alpha
\]
\end{thm}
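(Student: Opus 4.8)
The plan is to exploit the decoupling that occurs when $k_2=0$. Then $A(u,v)=k_1u$, so the first two coordinates of (\ref{disc}) evolve among themselves,
\[
x^{(1)}=x+b(1-x)-\beta_1k_1ux,\qquad u^{(1)}=u\bigl(1-b-\alpha+\beta_1k_1x\bigr),
\]
while $y,v$ are merely driven by $u$. The whole statement will follow once I show $u^{(n)}\to0$: writing $\varepsilon_n:=1-x^{(n)}=u^{(n)}+y^{(n)}+v^{(n)}\ge0$, the convergence $x^{(n)}\to1$ together with $0\le y^{(n)},v^{(n)}\le\varepsilon_n$ squeezes $y^{(n)},v^{(n)}\to0$ and gives $\lambda^{(n)}\to\lambda_1$. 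I use the standing positivity $b>0$ (for $b=0$ the coordinate $x$ is non-increasing so $\lambda_1$ can never be reached, and in that degenerate regime every point with $u^0=0$ is in fact fixed, hence excluded).

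First I treat the case $u^0=0$. Here $u^{(1)}=0$, so $u^{(n)}\equiv0$ and the driving terms vanish: the recursion becomes linear, $1-x^{(1)}=(1-b)(1-x)$, whence $\varepsilon_n=(1-b)^n(1-x^0)\to0$ since $0\le 1-b<1$ by (\ref{cond}); likewise $y^{(n)}=(1-b)^ny^0$ and $v^{(n)}=(1-b)^nv^0$ tend to $0$. This already yields $\lambda^{(n)}\to\lambda_1$.

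The substantial case is $\beta_1k_1\le b+\alpha$ with $u^0>0$. The key estimate is
\[
u^{(1)}=u\bigl(1-b-\alpha+\beta_1k_1x\bigr)\le u\bigl(1-(b+\alpha)u\bigr)=u-(b+\alpha)u^2 ,
\]
obtained from $\beta_1k_1x\le(b+\alpha)x\le(b+\alpha)(1-u)$, using $\beta_1k_1\le b+\alpha$ and $x\le 1-u$ on $S^3$; since also $1-b-\alpha+\beta_1k_1x\ge 1-b-\alpha\ge0$, the sequence stays non-negative. Hence $\{u^{(n)}\}$ is non-increasing and bounded below, so $u^{(n)}\to u^\ast\ge0$, and passing to the limit in the displayed inequality forces $u^\ast\le u^\ast-(b+\alpha)(u^\ast)^2$, i.e. $u^\ast=0$ because $b+\alpha>0$.

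Finally I would pass from $u^{(n)}\to0$ to the conclusion. The $x$-equation gives the exact recursion $\varepsilon_{n+1}=(1-b)\varepsilon_n+\beta_1k_1u^{(n)}x^{(n)}\le(1-b)\varepsilon_n+\beta_1k_1u^{(n)}$, a linear comparison with contraction factor $1-b\in[0,1)$ and forcing term $\beta_1k_1u^{(n)}\to0$, so $\varepsilon_n\to0$, that is $x^{(n)}\to1$, and the squeeze finishes the proof. I expect the main obstacle to be the borderline subcase $\beta_1k_1=b+\alpha$: there the multiplier $1-b-\alpha+\beta_1k_1x$ tends to $1$ as $x\to1$, so $u$ does \emph{not} decay geometrically and a naive contraction fails; the quadratic estimate above together with the monotone-convergence argument is precisely what rescues this degenerate regime. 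A secondary point requiring care is to isolate the genuinely degenerate parameter choices (such as $b+\alpha=0$) and absorb them into the ``except fixed points'' clause.
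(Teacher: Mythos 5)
Your proof is correct, but in the substantive case $\beta_1k_1\le b+\alpha$ it takes a genuinely different route from the paper's. For $u^{(n)}\to0$ the paper only uses monotonicity ($u^{(1)}\le u$ since $\beta_1k_1x\le\beta_1k_1\le b+\alpha$) and then derives a contradiction by passing to the limit in the recursion, which at the borderline $\beta_1k_1=b+\alpha$ needs an extra observation (there $\lim x^{(n)}=1$ forces $\bar u=0$ anyway); your quadratic bound $u^{(n+1)}\le u^{(n)}-(b+\alpha)\bigl(u^{(n)}\bigr)^2$, coming from $x\le 1-u$ on the simplex, kills the limit in one stroke and handles the borderline case transparently. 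For the remaining coordinates the paper sets $z=y+v$, proves a separate invariance lemma for the set $\{bz-\alpha u\ge0\}$ to establish eventual monotonicity and hence convergence of $z^{(n)}$, and only then identifies the limit as $0$; you bypass that lemma entirely via the exact recursion $\varepsilon_{n+1}=(1-b)\varepsilon_n+\beta_1k_1u^{(n)}x^{(n)}$ for $\varepsilon_n=1-x^{(n)}$, a contraction with vanishing forcing term, which gives $x^{(n)}\to1$ directly and squeezes $y^{(n)},v^{(n)}$. Your version is shorter and arguably cleaner; the paper's invariant-set lemma yields the extra structural information that $z^{(n)}$ is eventually monotone. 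Both arguments silently require $b>0$ (the paper invokes ``positiveness of $b$'' only at the very end, and its step 1 sums a geometric series with ratio $1-b$); you are right to flag this, since for $b=0$ and $u^0>0$ the coordinate $x$ is non-increasing and the claimed limit $\lambda_1$ is unreachable, while for $b=k_2=0$ every point with $u^0=0$ is fixed and thus excluded by hypothesis.
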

\begin{proof} Suppose that $k_2=0$ then the operator (\ref{disc}) has the following representation:
\begin{equation}
V:\left\{ \begin{alignedat}{1}x^{(1)} & =x+b-bx-\beta_1k_1ux\\
u^{(1)} & =u-bu-\alpha u+\beta_1k_1ux\\
y^{(1)} & =y-by+\alpha u -\beta_2k_1uy\\
v^{(1)} & =v-bv+\beta_2k_1uy
\end{alignedat}
\right.\label{thm3}
\end{equation}

1) Assume $u^0=0$ then $u^{(n)}=0,$ and so from first equation of (\ref{thm3}) we have
 \[
 \lim_{n\to\infty}x^{(n+1)}=x\lim_{n\to\infty}(1-b)^n+b\sum_{k=0}^{\infty}(1-b)^k=1 \ \  \text{for any} \ \ \lambda^0\in S^3
\]
and a limit of the operator is $\lambda_1.$

2) For $\beta_1k_1\leq b+\alpha$ from second equation of (\ref{thm3}) one has $u^{(1)}=u-(b+\alpha-\beta_1k_1x)u\leq u,$ so the sequence $u^{(n)}$ has a limit $\bar{u}.$ Assume $\bar{u}>0,$ then by taking a limit we get
\[
\lim_{n\to\infty}u^{(n+1)}=\lim_{n\to\infty}u^{(n)}-\lim_{n\to\infty}(b+\alpha-\beta_1k_1x^{(n)})u^{(n)} \Rightarrow
\]
\[
\Rightarrow \bar{u}=\bar{u}-(b+\alpha)\bar{u}+\beta_1k_1\lim_{n\to\infty}x^{(n)}u^{(n)} \Rightarrow \lim_{n\to\infty}x^{(n)}\left(\frac{u^{(n)}}{\bar{u}}\right)=\frac{b+\alpha}{\beta_1k_1}\geq1
\]
Note that $u^{(n)}$ non-increasing sequence and for any $n\in N,$ $x^{(n)}\leq1,$ so from $\frac{b+\alpha}{\beta_1k_1}\geq1$ we have a contradiction. Thus, $\bar{u}=0.$ Next, by adding last two equations of (\ref{thm3}) we have $y^{(1)}+v^{(1)}=(1-b)(y+v)+\alpha u.$ By denoting $y^{(n)}+v^{(n)}=z^{(n)}$ one has $z^{(1)}=(1-b)z+\alpha u.$
We formulate a new operator with respect to $u,z:$
\begin{equation}
W:\left\{ \begin{alignedat}{1}u^{(1)} & =u-bu-\alpha u+\beta_1k_1ux\\
z^{(1)} & =z-bz+\alpha u\\
\end{alignedat}
\right.\label{newoper}
\end{equation}
 Here we have a useful lemma.

\begin{lem}\label{lem1} If $\beta_1k_1\leq b+\alpha$ then the set $$M=\{(u;y)\in S^1: bz-\alpha u\geq0\}$$
is an invariant with respect to operator (\ref{newoper}).
\end{lem}
\begin{proof} Let $(u,z)\in M,$ i.e., $bz-\alpha u\geq0.$ We check the condition $bz^{(1)}-\alpha u^{(1)}\geq0$:
$$
bz^{(1)}-\alpha u^{(1)}=b(z-bz+\alpha u)-\alpha(u-bu-\alpha u+\beta_1k_1ux)=$$
$$=bz-\alpha u+b\alpha u-b^2z+b\alpha u+\alpha^2u-\alpha \beta_1k_1ux=$$
$$=bz-\alpha u-b(bz-\alpha u)+\alpha u(b+\alpha-\beta_1k_1x)=$$
$$=(bz-\alpha u)(1-b)+\alpha u(b+\alpha-\beta_1k_1x)\geq0.
$$
Thus, the Lemma is proved.
\end{proof}
By the Lemma (\ref{lem1}) we show the existence of limit $z^{(n)}.$ Assume that $(u^0,z^0)\in M$ then $z^{(n)}$ is decreasing. Let be $(u^0,z^0)\notin M$ then there are two possible cases: \\
 (a) If after some finite step $k,$ $(u^{(k)},z^{(k)})\in M$ then $(u^{(n)},z^{(n)})\in M$ for all $n>k,$ so $z^{(n)}$ is decreasing for all $n>k.$\\
 (b) If $(u^{(n)},z^{(n)})\notin M$ for any $n\in N$ then $bz^{(n)}-\alpha u^{(n)}<0,$ $\forall n\in N,$  and so the sequence $z^{(n)}$ is increasing.
Therefore, the sequences $z^{(n)}$ has a limit $\bar{z}.$ If we take a limit from $z^{(n+1)}=(1-b)z^{(n)}+\alpha u^{(n)},$ from positiveness of $b$ and from $\bar{u}=0$ we get $\bar{z}=0.$ Consequently,
 $$\lim_{n\to\infty}(u^{(n)}+y^{(n)}+v^{(n)})=0$$
 and so $\lim_{n\to\infty}x^{(n)}=0.$ The proof of the theorem is finished.
 \end{proof}

\section{Conclusion}
The crucial point of the SISI model is that an individual can be infected twice. We proved uniqueness of the positive solution of the equation (\ref{fpc}) and it provides uniqueness of fixed point of the operator  inside of the simplex $S^3.$ Biological meaning of this result is that if susceptibility of persons in $S$ multiplying infectivity of persons in $I$ greater than sum of birth rate and recovery rate then the disease becomes endemic at the limit (assume that operator has a limit). In addition, there is no susceptibility of persons in $S_1$ such that for any initial state near $\lambda_{16}$ number of second time infected persons disappears at the limit.  If  susceptibility of persons in $S$ multiplying infectivity of persons in $I$ less than sum of birth rate and recovery rate and either there is no infectivity of persons in $I_1$ or susceptibility of persons in $S_1$  then for any initial states stays only susceptible persons who did not have the disease before.
 \hfill$\Box$ \

\textbf{References}
\begin{enumerate}
\bibitem{Muller} Johannes M$\ddot{u}$ller, Christina Kuttler. Methods and models in mathematical biology. Springer, 2015, 721 p.
\bibitem{Green} D. Greenhalgh, O. Diekmann, M. de Jong. Subcritical endemic steady states in mathematical
models for animal infections with incomplete immunity. Math.Biosc. 2000, 165, 25 p.
\bibitem{SS} S.K. Shoyimardonov. A non-linear discrete-time dynamical system related to epidemic SISI model.
In arXiv:2008.12677v1 [math.DS], 2020, 17 p.
\bibitem{De} R.L. Devaney. An Introduction to Chaotic Dynamical System. Westview Press. 2003, 336 p.
\bibitem{GMR} R.N. Ganikhodzhaev, F.M. Mukhamedov, U.A. Rozikov. Quadratic stochastic operators and processes: results and open
problems, Inf. Dim. Anal. Quant. Prob. Rel. Fields. \textbf{14}(2). 2011, pp. 279-335.
\bibitem{L} Y.I. Lyubich. Mathematical structures in population genetics, Springer-Verlag, Berlin. 1992, 383 p.
\bibitem{Rpd} U.A. Rozikov,  Population dynamics: algebraic and probabilistic approach. {\sl World Sci. Publ}. Singapore. 2020, 460 pp.
\bibitem{Rmb} U.A. Rozikov, An introduction to mathematical billiards. {\sl World Sci. Publ}. Singapore. 2019, 224 pp.
\end{enumerate}

{\small
\begin{tabular}{p{9cm}}
    Shoyimardonov S.K.,\\
    V.I.Romanovskiy Institute of Mathematics, 4, University str., Tashkent, Uzbekistan. \\
    e-mail: shoyimardonov@inbox.ru\\
    \\

\end{tabular}
}

\label{lastpage}

\end{document}